\documentclass[11pt,letterpaper]{amsart}

\usepackage{geometry}
\usepackage[english]{babel}
\usepackage{amsmath,amssymb,mathtools,enumerate,latexsym,graphicx,cite}
\usepackage{xcolor}
\usepackage{hyperref}
\hypersetup{
  colorlinks=true,
  linkcolor=black,
  citecolor=black,
  urlcolor=black,
  pdftitle={On the stability of rank-one factors for Anosov Zk-actions},
  pdfauthor={Danijela Damjanovi\'c and Jiesong Zhang}
}

\newif\ifshowchanges
\showchangestrue

\newenvironment{proof*}[1]{\medskip\noindent\textbf{#1\ }}{\hspace*{\fill}$\Box$\medskip}
\newtheorem{theorem}{Theorem}[section]
\newtheorem{lemma}[theorem]{Lemma}
\newtheorem{proposition}[theorem]{Proposition}

\newtheorem{definition}[theorem]{Definition}
\theoremstyle{remark}
\newtheorem{remark}[theorem]{Remark}

\newtheorem{question}{Question}

\newtheorem{mainthm}{\bfseries Theorem}

\newcommand{\E}{{\mathbf E}}

\newcommand{\W}{{\mathbf W}}

\newcommand{\ZZ}{{\mathbb Z}}

\newcommand{\SL}{{\rm SL}}

\newcommand{\be}[1]{\begin{equation} \label{#1} }
\newcommand{\ee}{\end{equation}}
\newcommand{\beq}{\begin{equation}}

\def \Diff{{\rm Diff}}

\def \W{{\mathcal W}}

\def \W{\mathcal{W}}

\def \hx0{\hat{x_0}}

\def \E{\mathcal{E}}

\def \L{\mathcal{L}}

\author{Danijela Damjanovi\'c}
\address{Department of Mathematics, Kungliga Tekniska h\"ogskolan, Lindstedtsv\"agen 25, SE-100 44 Stockholm, Sweden}
\email{ddam@kth.se}
\author{Jiesong Zhang}
\address{Department of Mathematics, Kungliga Tekniska h\"ogskolan, Lindstedtsv\"agen 25, SE-100 44 Stockholm, Sweden}
\email{jiesongz@kth.se}
\title[STABILITY OF RANK-ONE FACTORS]{On the stability of rank-one factors for Anosov $\mathbb Z^k$-actions}

\begin{document}
\begin{abstract}
We prove $C^1$-stability of $C^\infty$ rank-one factors for a large class of higher rank Anosov $\ZZ^k$-actions. If the factor is a projection of a smooth fiber bundle and the action contains an element which acts as the identity on the rank-one factor and is Anosov on every fiber, then any sufficiently $C^1$-small $C^\infty$ perturbation of the action will also have a $C^\infty$ rank-one factor. In particular, we prove the $C^1$-stability of the product structure for Anosov $\ZZ^2$-actions which are obtained as a (diagonal) product of two $\ZZ$-Anosov actions. 
\end{abstract}

\maketitle

\tableofcontents

\section{Introduction}
A diffeomorphism $f\in \Diff^\infty(M)$ is Anosov if there is a continuous $Df$-invariant splitting $TM=E^s\oplus E^u$ such that vectors in $E^s$ (resp. $E^u$) are exponentially contracted (resp. expanded) at a uniform rate. Anosov diffeomorphisms have been extensively studied. The main models for Anosov systems are the {\it algebraic} ones: automorphisms of nilmanifolds whose derivatives have all eigenvalues off the unit circle.
In \cite{Franks, Manning}, it is proved that an Anosov diffeomorphism $f$ on a nilmanifold is necessarily topologically conjugate to an algebraic model, which is then called a {\it linearization} of $f$.
The topological conjugacy, however, need not have higher regularity.
Nevertheless, if an Anosov diffeomorphism on a nilmanifold has many smooth symmetries, i.e. it is contained in a smooth $\mathbb Z^k$-action with $k\ge 2$, then the regularity of the conjugacy can be improved to $C^\infty$, under the condition that the linearization of the  $\mathbb Z^k$-action has no (algebraic) {\it rank-one factors}. This was first proven locally, in the neighborhood of algebraic models \cite{ks94}, and much later  globally \cite{fks13, hw14}.  

In general, a $\mathbb Z^k$-action $\alpha: \mathbb Z^k\to \Diff^\infty(M)$ on a compact smooth manifold $M$ is said to be Anosov if it contains an Anosov diffeomorphism.
If the action $\alpha$ factors via a smooth projection map to a smooth $\mathbb Z$-action (possibly on a different manifold), then we say that the action has a smooth rank-one factor.

In the 1990s, Katok and Spatzier conjectured that Anosov $\mathbb Z^k$-actions with no smooth rank-one factors are {\it globally rigid}: they should be smoothly conjugate to an {\it algebraic} Anosov $\mathbb Z^k$-action on a nilmanifold, after passing to a finite cover and a finite-index subgroup if necessary \cite{ks94}.
This conjecture for general manifolds is still open (see \cite{DX} for most recent developments and some history).

If the above conjecture is true, then the class of Anosov actions with no rank-one factors is open. We consider here the complementary class of Anosov actions {\it that have rank-one factors}.
We ask the following question: 

\begin{question} Is the class of Anosov $\mathbb Z^k$-actions, $k\ge 2$, with a smooth rank-one factor open in the $C^1$ topology?
\end{question}

An affirmative answer to the question above, together with the Katok-Spatzier conjecture,  gives a better topological picture of the space of smooth $\mathbb Z^k$ ($k\ge 2$) Anosov actions. 

In this note, we show that for a certain large subclass of Anosov $\mathbb Z^k$-actions, the property of having a smooth rank-one factor indeed persists under $C^1$-small smooth perturbations.

This should be contrasted with recent findings \cite{vin25} for $\mathbb R^k$-actions with $k\ge 2$. An $\mathbb R^k$-action is Anosov if there is an element of the action that is normally hyperbolic with respect to the $\mathbb R^k$-orbit foliation.
In \cite{vin25}, Vinhage demonstrates {\it instability} of rank-one factors for Anosov $\mathbb R^k$-actions by constructing a time change of a product $\mathbb R^2$-action that eliminates all smooth rank-one factors.

The main result of this paper (Theorem \ref{theorem rank 1}) shows {\it stability} of rank-one factors for a large class of Anosov $\mathbb Z^k$-actions. The class of actions we consider is natural: we assume that the factor is a projection of a smooth fiber bundle and that the action contains an element which acts as the identity on the rank-one factor and is Anosov on every fiber. In particular, (diagonal) products of Anosov $\mathbb Z$-actions are of this kind (see Theorem \ref{theorem product}).

We note that the rank-one factors of $\mathbb Z^k$-actions that we consider here are {\it Anosov}, whereas earlier results studied the persistence of identity factors or translation factors for $\mathbb Z^k$-actions that are {\it partially hyperbolic} (for example,  \cite{FD}, \cite{dwx21}, \cite{DSWX}, \cite{LSV}). The mechanisms for persistence of factors in the results on partially hyperbolic actions transverse to a non-Anosov factor, are very different than what we do in this paper. \\

An interesting question is to what extent rigidity theory for Anosov actions with no rank-one factors can be extended to actions with rank-one factors. For example, given an algebraic Anosov $\mathbb Z^k$-action with a non-trivial maximal smooth rank-one factor, is it true that every sufficiently small smooth perturbation is conjugate to the unperturbed action via a conjugacy that is smooth {\it transversely} to the factor?
This follows, even globally on nilmanifolds, from the forthcoming work \cite{DSWX}, but only under certain very strong (exponential) mixing conditions transverse to the factor.
In particular, the conditions in \cite{DSWX} never hold for an action which is a (diagonal) product of two Anosov $\mathbb Z$-actions. 

Besides (diagonal) product actions, we illustrate our result by applying it to Anosov actions on nilmanifolds, see Section \ref{ex:borel-smale} for an example. 

Lastly, we discuss an example with a rank-one factor where neither our results, nor other existing results apply (see Section \ref{non-example}). Both the stability of the rank-one factor and the transverse regularity of conjugacy for perturbations, remain open problems.

\color{black}

\section{Formulation of the results}
Let $\alpha : \mathbb{R}^k \times \mathbb{Z}^\ell \curvearrowright M$ be a locally free action by $C^\infty$ diffeomorphisms. An element $a \in \mathbb{R}^k \times \mathbb{Z}^\ell$ is called \textit{Anosov} if there exists an $a$-invariant dominated splitting, 
\[
TM = E^s_a \oplus T\mathcal{O} \oplus E^u_a,
\]
such that $D\alpha(a)|_{E^s_a}$ is uniformly contracting and  $D\alpha(a)|_{E^u_a}$ is uniformly expanding. Here $\mathcal{O}$ denotes the $\mathbb{R}^k$-orbit foliation (when $k=0$, this term is understood to be the 
zero distribution). The action $\alpha$ is called \emph{Anosov} if it contains an Anosov element.
\begin{definition}[Rank-one factor]
Let $r \geq 1$, and let $\alpha : \mathbb{R}^k \times \mathbb{Z}^{\ell} \curvearrowright M$ be a $C^r$ locally free action. A \emph{$C^r$ rank-one factor} of $\alpha$ consists of the following data:
\begin{itemize}
\item a $C^r$ manifold $N$ with $\dim N \ge 1$, 
\item either a $C^r$, fixed-point-free flow $\psi_t : N \to N$, or a $C^r$ diffeomorphism $f : N \to N$,
\item a $C^r$  submersion $\pi : M \to N$, and
\item a surjective homomorphism $\sigma : \mathbb{R}^k \times \mathbb{Z}^{\ell} \to \mathbb{R}$ (respectively $\sigma : \mathbb{R}^k \times \mathbb{Z}^{\ell} \to \mathbb{Z}$),
\end{itemize}
such that for all $\mathbf{n} \in \mathbb{R}^k \times \mathbb{Z}^{\ell}$ and $x \in M$,
\[
\pi(\alpha(\mathbf{n})x) = \psi_{\sigma(\mathbf{n})}(\pi(x))
\quad \text{or} \quad
\pi(\alpha(\mathbf{n})x) = f^{\sigma(\mathbf{n})}(\pi(x)).
\]
We also allow passing to a finite-index subgroup of $\mathbb{R}^k \times \mathbb{Z}^{\ell}$ or a finite cover of $M$ to which the action lifts.

\end{definition}

\subsection{Fiberwise Anosov rank-one factors}
Motivated by the notion of fiberwise Anosov diffeomorphisms
\cite{fg16,zha24}, we introduce a class of $C^\infty$ rank-one factors, called \textit{fiberwise Anosov rank-one factors}, and show that this class is stable under $C^1$ perturbations.

\begin{definition}
A diffeomorphism $f \in \Diff^\infty(M)$ of a closed manifold $M$ is called \textit{fiberwise Anosov over the identity (FAOI)} if there exists a $C^\infty$ fiber bundle $\pi : M \to N$ with closed, connected fibers such that 
\[
\pi \circ f = \pi,
\]
and the restriction of $f$ to each fiber $\pi^{-1}(y)$ is Anosov.

For a $C^\infty$ Anosov action $\alpha : \mathbb{Z}^k \curvearrowright M,k\geq 2$ with a  $C^\infty$ rank one factor given by $\pi : M \to N$, if $\alpha(\textbf{n})$ is FAOI for some $\textbf{n} \in \ZZ^k$ with respect to $\pi : M \to N$, then the rank-one factor given by $\pi$ is called a \textit{fiberwise Anosov rank-one factor}. 
\end{definition}
It follows from the definition that a fiberwise Anosov rank-one factor is always a $C^\infty$ rank-one factor. Our main result states that fiberwise Anosov rank-one factors are $C^1$-stable. In particular, they give a class of $C^\infty$ rank-one factors that are $C^1$-stable. 
\begin{mainthm}\label{theorem rank 1}
Let $\alpha_0 : \mathbb{Z}^k \curvearrowright M$, $k\ge 2$ be an Anosov action with a fiberwise Anosov rank-one factor $\pi_0$. Then any $C^\infty$ action $\alpha : \mathbb{Z}^k \curvearrowright M$ that is sufficiently $C^1$-close to $\alpha_0$ has a fiberwise Anosov rank-one factor $\pi$. Moreover, the conjugacy $h$ provided by structural stability maps the fibers of $\pi_0$ to the fibers of $\pi$. 
\end{mainthm}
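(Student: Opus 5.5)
Fix $\mathbf n_0\in\ZZ^k$ with $f_0:=\alpha_0(\mathbf n_0)$ FAOI with respect to $\pi_0:M\to N$, and let $f:=\alpha(\mathbf n_0)$ for the perturbed action. The plan is to obtain the new fibers as \emph{leaves of a foliation produced by $f$ alone}, and then to show that the other generators of $\alpha$ permute these leaves. The fibers of $\pi_0$ are compact, $f_0$-invariant, and on each of them $f_0$ is Anosov, while in the base direction $f_0$ acts as the identity. Hence $f_0$ is partially hyperbolic, with $E^c_0=T(\text{base direction})$ isometric, and the fibration $\mathcal F_0=\{\pi_0^{-1}(y)\}$ is normally hyperbolic: it is uniformly normally \emph{expanded and contracted}, since $E^s\oplus E^u$ is tangent to the fibers and $E^c_0$ is transverse. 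More precisely, I will view it as a compact $f_0$-invariant lamination whose leaves are normally hyperbolic in the sense of Hirsch--Pugh--Shub, with the dynamics transverse to the leaves neutral and the dynamics along the leaves hyperbolic. The exact HPS setting I want is the one for \emph{plaque-expansive foliations fixed leafwise}: $f_0$ fixes each leaf of $\mathcal F_0$, the normal direction is $E^c_0$, and the tangent direction carries $E^s\oplus E^u$. This is the dual of the usual center foliation picture, so I will instead apply HPS to the \emph{center-stable/center-unstable} structure as follows.

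\textbf{Step 1.} Since $f_0$ is partially hyperbolic with center bundle $E^c_0=\ker$-complement tangent to a smooth compact foliation $\mathcal C_0$ (a local section of the bundle, e.g.\ given by a flat connection is not available in general), I will avoid needing $\mathcal C_0$. Instead, I will use that $f$, being $C^1$-close to $f_0$, is partially hyperbolic with splitting $E^s_f\oplus E^c_f\oplus E^u_f$. Here $E^s_f$ and $E^u_f$ integrate to strong foliations $W^s_f$ and $W^u_f$. Define the candidate fiber through $x$ as the set accessible from $x$ by $su$-paths, $\mathrm{Acc}_f(x)$.

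For $f_0$, $\mathrm{Acc}_{f_0}(x)=\pi_0^{-1}(\pi_0 x)$, because the restriction of $f_0$ to a fiber is Anosov on a connected closed manifold, and stable and unstable leaves of an Anosov diffeomorphism generate the whole manifold under accessibility.

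\textbf{Step 2.} Use the structural stability conjugacy $h$ of the Anosov action, with $h\circ\alpha_0=\alpha\circ h$ and $h$ close to the identity. I claim $h$ maps $W^s_{f_0}$ to $W^s_f$ and $W^u_{f_0}$ to $W^u_f$. These strong foliations can be characterized topologically through an Anosov element $a$ of the action. The route I will try first is as follows. The rank-one factor gives $\sigma$, and for an Anosov element $a$ the group $\ker\sigma$ contains $\mathbf n_0$. The stable set of $f$ for points is characterized by $d(f^n x,f^n y)\to 0$, and exponential convergence then follows from partial hyperbolicity together with center isometry. Since $h$ is only H\"older, the direct characterization by $d(f^n x,f^n y)\to0$ is the cleaner choice: $h$ conjugates $f_0$ to $f$ and is uniformly continuous, so it preserves topological stable sets. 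I must check that the topological stable set of $f$ equals $W^s_f$. This holds if the center of $f$ admits no asymptotic contraction, which I obtain because $f$ commutes with the generators of $\alpha$ and $h$ conjugates the center behavior of $f_0$, where the center is isometric (equicontinuity of $f_0$ transverse to the fibers transfers by $h$).

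Consequently $h$ maps $\mathrm{Acc}_{f_0}$ classes to $\mathrm{Acc}_f$ classes, so the sets $\mathcal F:=\{h(\pi_0^{-1}(y))\}$ form a partition into $f$-accessibility classes, each of which is a topological manifold and is $\alpha$-invariant as a partition.

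\textbf{Step 3 (main obstacle): smoothness.} I must show that $\mathcal F$ is a $C^\infty$ fiber bundle. Each leaf $L=h(\pi_0^{-1}y)$ is a compact set that is invariant under $f$ (because $f_0$ fixes fibers), and $f|_L$ is topologically Anosov. $L$ is saturated by $W^s_f$ and $W^u_f$, which are $C^\infty$ immersed submanifolds. The goal is to show that $E^s_f\oplus E^u_f$ is integrable, so that it is tangent to $L$. The approach I will try first is the HPS persistence theorem for the compact normally hyperbolic \emph{$f_0$-invariant} submanifolds, applied to $f_0^{-1}$ and $f_0$ restricted to invariant neighbourhoods; this does not apply directly, because the fibers are not normally hyperbolic. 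Instead, I will prove integrability using the commuting structure. $f$ fixes each leaf $L$, so $D f$ preserves $TL$ wherever it exists, and the leaf $L$ is a $C^1$ submanifold obtained as the union of $W^u_f$-leaves through points of a $W^s_f$-leaf. By the local product structure, this union is locally the graph of a map built from the two strong foliations, which have $C^\infty$ leaves. Smoothness along the leaves then follows from Journé's lemma applied inside $L$, where $f|_L$ is Anosov with $C^\infty$ stable and unstable leaves.

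Transverse smoothness of $\mathcal F$, i.e.\ that $\pi:=\pi_0\circ h^{-1}$ is $C^\infty$, is the hard part. For this I use that the center foliation transverse to $\mathcal F$ is uniformly isometric, so that $f$ is center bunched with all bunching constants arbitrarily close to $1$. In this situation the holonomies of $W^s_f$ and $W^u_f$ between center plaques are $C^\infty$, by the standard $C^r$ holonomy results under $r$-bunching, and $\mathcal F$ is then locally a product of smooth holonomies. This shows that $\pi$ is a $C^\infty$ submersion whose fibers are $\alpha$-permuted, giving the fiberwise Anosov rank-one factor with $f$ FAOI.
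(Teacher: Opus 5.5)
Your Steps 1--2 follow the paper's route in substance. You identify $h(W^s_{f_0})=W^s_f$ and $h(W^u_{f_0})=W^u_f$, so the $h$-images of the $\pi_0$-fibers are the $su$-classes of $f$. You then get leafwise smoothness from the two smooth transverse subfoliations by a Journé-type argument, as the paper does.

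The genuine gap is the transverse smoothness in Step 3. You assert that the center of $f$ is ``uniformly isometric'', with bunching constants arbitrarily close to $1$, but nothing you have established gives this. The conjugacy $h$ is only H\"older, so the equicontinuity it transfers is purely topological and says nothing about $Df|_{\E^c}$. Meanwhile $C^1$-closeness only gives $\|Df^{\pm 1}|_{\E^c}\|\le 1+\epsilon$ for a fixed $\epsilon$ set by the size of the perturbation. That yields $r$-bunching only up to a finite $r$, roughly $\lambda/\epsilon$ with $\lambda$ the strong rate. Hence you get $C^r$ strong holonomies and a $C^r$ foliation for that $r$ only. The smallness threshold in the theorem cannot depend on $r$, so this does not produce a $C^\infty$ factor.

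The missing idea is to prove \emph{subexponential} center growth for $f$, as the paper does.
\begin{itemize}
\item Suppose an ergodic $f$-invariant measure had a negative center exponent. Then the Pesin local stable disc at a regular point $x$ would have dimension $>\dim\E^s$ and consist of points forward-asymptotic to $x$.
\item $h^{-1}$ would then embed this disc into the topological stable set of $f_0$ at $h^{-1}x$. That set is the $(\dim E^s)$-dimensional leaf $W^s_{f_0}(h^{-1}x)$, because $f_0$ fixes fibers and is Anosov on them. This contradicts invariance of domain. Applying the same argument to $f^{-1}$ rules out positive exponents.
\item Zero center exponents for \emph{all} ergodic measures then upgrade, via Lemma~\ref{lemma dwx}, to $\sup_x\|D_xf^n|_{\E^c}\|\le C_\varepsilon e^{\varepsilon|n|}$ for every $\varepsilon>0$. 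This is $\infty$-bunching.
\item Combined with dynamical coherence of $f$ (HPS stability, using plaque expansivity of $f_0$), this gives $C^\infty$ center leaves and $C^\infty$ strong holonomies between them (Lemma~\ref{lemma r-bunched}).
\item The holonomy of $h(W^{su})$ between center plaques is a composition of these, and Journé's lemma finishes.
\end{itemize}

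Two smaller points.
\begin{itemize}
\item You take for granted that the structural-stability conjugacy of one Anosov element conjugates the whole action. This needs an argument: $h^{-1}\alpha(\mathbf e_i)h\,\alpha_0(-\mathbf e_i)$ commutes with the Anosov element and is $C^0$-close to the identity, hence equals the identity by expansivity.
\item In Step 2 no information about the center of $f$ is needed. $W^s_f(hx)$ lies in the topological stable set $h(W^s_{f_0}(x))$, and the two have equal dimension. Invariance of domain and connectedness then already force equality.
\end{itemize}
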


In particular, if the rank-one factors of an Anosov $\ZZ^k$-action are all fiberwise Anosov, then for the perturbations of this action, the topological rank-one factors and smooth rank-one factors coincide.

The $C^1$-closeness of $\alpha$ to $\alpha_0$ is used in two ways. First, since we do not assume that $M$ is an infranilmanifold, we invoke the stability of Anosov diffeomorphisms to obtain a H\"older conjugacy (see Lemma~\ref{lemma topo conjugacy}). Second, it is used to establish partial hyperbolicity and dynamical coherence for the perturbation of an FAOI diffeomorphism (see Lemma~\ref{lemma FAOI ph}). 

\begin{remark}\label{rem finite regularity}
Although our focus is on the $C^\infty$ category, Theorem~\ref{theorem rank 1} admits a finite-regularity version. Suppose that $\alpha_0$ is $C^r$ for some $r >2$, and that the given fiberwise Anosov factor has a $C^{r-1}$ fiber bundle projection. Then sufficiently $C^1$-small $C^r$ perturbations admit a $C^s$ fiberwise Anosov rank-one factor  for every $1<s<r-1$, with fiberwise Anosov understood in the corresponding category. 
\end{remark}


\subsection{Rank-one factors of product actions}
A natural class of Anosov actions with fiberwise Anosov rank-one factors is given by product actions.
\begin{definition}[Product action]
Let $M = M_1 \times M_2$ be a product of two closed manifolds. A $C^\infty$ Anosov action $\alpha: \mathbb{Z}^2 \curvearrowright M$ (or $\alpha: \mathbb{R}^2 \curvearrowright M$) is called a \emph{product action} if 
\[
\alpha(m,n)=f_1^m \times f_2^n 
\quad \text{or} \quad
\alpha(t,s)=\psi_1(t) \times \psi_2(s),
\]
where each $f_i$ (respectively $\psi_i$) is an Anosov diffeomorphism (respectively flow) on $M_i$. 
\end{definition}
In \cite{vin25}, Vinhage performs a time change along the flow direction of a product Anosov $\mathbb{R}^2$-action such that the perturbed action has no $C^\infty$ rank-one factor. For $\ZZ^2$-actions, such a construction is more subtle, as there is no center direction. In contrast, we prove the following stability result: 
\begin{mainthm}\label{theorem product} 
Let $\alpha_0 : \mathbb{Z}^2 \curvearrowright M$ be a product Anosov action on a closed manifold $M=M_1\times M_2$. Then any $C^\infty$ action $\alpha : \mathbb{Z}^2 \curvearrowright M$ that is sufficiently $C^1$-close to $\alpha_0$ is smoothly conjugate to a product Anosov action 
\[\alpha(m,n)=(f_1')^m\times(f_2')^n\]
on $M_1\times M_2$. 
\end{mainthm}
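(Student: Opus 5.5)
The plan is to apply Theorem~\ref{theorem rank 1} twice, once for each coordinate projection, and then combine the two resulting factors into a smooth conjugacy.

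\textbf{The two factors.} For $\alpha_0(m,n)=f_1^m\times f_2^n$, the projection $\pi_2:M_1\times M_2\to M_2$ is a $C^\infty$ trivial fiber bundle with fibers $M_1\times\{y\}$, which are closed and connected after passing to a connected component if necessary. It satisfies $\pi_2\circ\alpha_0(m,n)=f_2^n\circ\pi_2$, so it is a rank-one factor with $\sigma(m,n)=n$. The element $\alpha_0(1,0)=f_1\times \mathrm{id}$ acts as the identity on $M_2$ and as the Anosov map $f_1$ on every fiber, so it is FAOI and $\pi_2$ is a fiberwise Anosov rank-one factor. Symmetrically, $\pi_1$ is one, with $\alpha_0(0,1)$ FAOI. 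Also $\alpha_0(1,1)$ is Anosov, so $\alpha_0$ is an Anosov action.

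\textbf{Perturbing both factors.} Theorem~\ref{theorem rank 1} applied to $\pi_2$ gives, for $\alpha$ $C^1$-close to $\alpha_0$, a $C^\infty$ fiber bundle $p_2:M\to N_2$ and a diffeomorphism $g_2$ with $p_2\circ\alpha(m,n)=g_2^{n}\circ p_2$. Here I use that $\sigma$ is unchanged: by the theorem, the structural-stability conjugacy $h$ carries the fibers $M_1\times\{y\}$ to the fibers of $p_2$. Consequently $N_2$ is identified with $M_2$ via $\bar h_2:=p_2\circ h|_{\{x_0\}\times M_2}$. This map is a homeomorphism, but only Hölder a priori. Likewise we obtain $p_1:M\to N_1$ with fibers $h(\{x\}\times M_2)$.

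\textbf{The conjugacy.} Define $\Phi=(p_1,p_2):M\to N_1\times N_2$. It is $C^\infty$, and it is a local diffeomorphism at $z$ provided $\ker Dp_1(z)\oplus\ker Dp_2(z)=T_zM$, i.e. provided the two fiber foliations are transverse. I would get transversality from the proof of Theorem~\ref{theorem rank 1}. There, the fibers of $p_2$ should arise as center leaves of the partially hyperbolic perturbation $\alpha(1,0)$ (Lemma~\ref{lemma FAOI ph}), and they are $C^1$-close to $M_1\times\{y\}$. Similarly the fibers of $p_1$ are $C^1$-close to $\{x\}\times M_2$, so transversality holds for small perturbations.

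Bijectivity comes from $h$. The map $\Phi\circ h$ sends $(x,y)$ to $(\bar h_1(x),\bar h_2(y))$, which is a bijection. Hence $\Phi$ is a bijective local diffeomorphism, hence a $C^\infty$ diffeomorphism. It follows that $N_i$ is diffeomorphic to $M_i$. Indeed, $N_i$ is homeomorphic to $M_i$, and to obtain a diffeomorphism I would restrict $\Phi^{-1}$ to a slice $N_1\times\{q\}$. This slice is a smooth fiber of $p_2$, which is a $C^1$-small graph over $M_1\times\{y\}$, so it projects diffeomorphically to $M_1$ under $\pi_1$. Composing $\Phi$ with these identifications gives a smooth conjugacy of $\alpha$ to $(f_1')^m\times(f_2')^n$, where $f_1'$ and $f_2'$ are $g_1$ and $g_2$ transported to $M_1$ and $M_2$. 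Each $f_i'$ is Anosov, being $C^1$-close to $f_i$ after these identifications, or alternatively by restricting $\alpha(1,0)$ to a fiber.

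\textbf{Main obstacle.} The delicate step is the $C^1$-closeness of the new fibers to the original ones, which is needed both for transversality and for the diffeomorphisms $N_i\cong M_i$. I would extract it from the construction in Lemma~\ref{lemma FAOI ph}, where center leaves of the perturbation are $C^1$-close to those of the unperturbed map by normal hyperbolicity (Hirsch–Pugh–Shub).
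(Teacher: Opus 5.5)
Your proposal is correct and is essentially the paper's proof. Both arguments take the two perturbed smooth foliations $h(M_1\times\{y\})$ and $h(\{x\}\times M_2)$, get transversality from the closeness of their tangent distributions to the product ones, get unique intersection of leaves from $h$, and identify each factor with $M_i$ via the coordinate projections. Your $\Phi=(p_1,p_2)$ is the paper's $\Psi$ up to the diffeomorphisms between the bases $N_i$ and the chosen leaves.

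One correction in your transversality step. The fibers of $p_2$ are the $\mathcal{W}^{su}$-leaves of $\alpha(1,0)$, equivalently the center leaves of $\alpha(0,1)$. They are not center leaves of $\alpha(1,0)$, whose center direction is close to $TM_2$. They are tangent to $\mathcal{E}^s\oplus\mathcal{E}^u$ of $\alpha(1,0)$, which is $C^0$-close to $TM_1$ by continuity of the dominated splitting under $C^1$ perturbation, so no appeal to normal hyperbolicity is needed.
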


We note that although $\alpha$ is smoothly conjugate to a product Anosov action, the components $f'_i$ are in general only topologically conjugate to $f_i$.



\subsection{Example with a rank-one factor that is not a fiberwise Anosov rank-one factor} \label{non-example}Although a fiberwise Anosov rank-one factor is always a $C^\infty$ rank-one factor, the converse does not hold: a $C^\infty$ rank-one factor need not be fiberwise Anosov. For example, let 
\[
A = \begin{pmatrix}
2 & 1 \\
1 & 1
\end{pmatrix} \in \SL(2,\mathbb{Z}).
\]
Consider the $\mathbb{Z}^2$-action on $\mathbb{T}^4$ generated by
\begin{equation}\label{eq para}
f_1 =
\begin{pmatrix}
A & 0 \\
0 & A
\end{pmatrix},
\qquad
f_2 =
\begin{pmatrix}
I & A \\
0 & I
\end{pmatrix}.
\end{equation}
Writing $\alpha(m,n)=f_1^m f_2^n$, the projection $\pi(x,y)=y$ is a $C^\infty$ rank-one factor with quotient map $A$ and $\sigma(m,n)=m$. However, no element of $\ker\sigma$ is Anosov on these fibers: each $f_2^n$ restricts to a translation $x\mapsto x+nAy$. Thus this factor is not fiberwise Anosov.

\begin{question}\label{question shear stability}
Is the smooth rank-one factor $\pi(x,y)=y$ of the action in \eqref{eq para} $C^1$-stable among $C^\infty$ actions?
\end{question}




\subsection*{Acknowledgment.}
We would like to thank Disheng Xu for carefully reading the first version of the manuscript and for his helpful suggestions. J. Z. would also like to thank Jonathan DeWitt for some earlier discussions on related topics. 

\subsection*{AI disclaimer.}
No AI tools have been used for obtaining the results of this paper, nor for the presentation of the results.

\section{Preliminaries}
\subsection{Partially hyperbolic systems}
Recall that a diffeomorphism $f \colon M \to M$ is called~\emph{partially hyperbolic} if the tangent bundle admits a continuous $Df$-invariant splitting
\[
TM = E^s \oplus E^c \oplus E^u,
\]
and there exists an integer $k \geq 1$ such that for every $x \in M$,
\[
\|Df^k(x)|_{E^s}\| < 1 < \|(Df^k(x)|_{E^u})^{-1}\|^{-1},
\]
and
\[
\|Df^k(x)|_{E^s}\| < \|(Df^k(x)|_{E^c})^{-1}\|^{-1}, 
\qquad 
\|Df^k(x)|_{E^c}\| < \|(Df^k(x)|_{E^u})^{-1}\|^{-1}.
\]
It is well known that the invariant distributions $E^s$ and $E^u$ are uniquely integrable into H\"older continuous foliations $\mathcal{W}^s$ and $\mathcal{W}^u$ respectively. The diffeomorphism $f$ is called~\textit{dynamically coherent} if, moreover, there exist $f$-invariant foliations $\mathcal{W}^{cs}$ and $\mathcal{W}^{cu}$ tangent to $E^{cs} := E^s \oplus E^c$ and $E^{cu} := E^c \oplus E^u$ respectively. 

\subsection{Center Lyapunov exponents}
Let $f:M \to M$ be a partially hyperbolic diffeomorphism with dominated splitting $TM =E^s\oplus E^c\oplus E^u$. By Oseledets' theorem, for any $f$-invariant ergodic probability measure $\mu$, there are real numbers $\chi_{\min}^c(\mu) \leq \chi^c_{\max}(\mu) $, called the extremal center Lyapunov exponents, such that for $\mu$-almost every $x$ and every nonzero $v\in E^c(x)$, the forward and backward limits exist and satisfy
\[\chi^c_{\min}(\mu) \leq \lim_{n \to \pm \infty} \frac{1}{n}
\log \bigl\| Df^n(v) \bigr\|  \leq \chi^c_{\max}(\mu) .\]
The following lemma is a special case of a well-known result that allows one to deduce~\emph{uniform growth} from knowledge about exponents for~\emph{every} invariant measure. The proof is a corollary of a classical result on subadditive sequences (see~\cite{sch98} or~\cite[Section~4]{kal11}).  

\begin{lemma}[{see e.g.~\cite[Lemma 10]{dwx21}}]\label{lemma dwx}
If $\chi_{\min}^c(\mu)=\chi_{\max}^c(\mu)=0$ for every $f$-invariant ergodic measure $\mu$, then $Df|_{E^c}$ has subexponential growth, i.e., for every $\varepsilon > 0$ there exists $C_\varepsilon > 0$ such that
\[
\sup_{x\in M}\|D_xf^n|_{E^c(x)}\| \le C_\varepsilon e^{\varepsilon |n|}, \quad \forall n \in \ZZ.
\]
\end{lemma}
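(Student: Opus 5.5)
We argue by contradiction, combining the subadditive ergodic theorem with a compactness argument on the space of invariant measures. For $n\ge 1$ set
\[
a_n:=\sup_{x\in M}\log\bigl\|D_xf^n|_{E^c(x)}\bigr\|.
\]
Since $E^c$ is $Df$-invariant, the chain rule gives $\|D_xf^{n+m}|_{E^c}\|\le\|D_{f^mx}f^n|_{E^c}\|\,\|D_xf^m|_{E^c}\|$. Hence $a_{n+m}\le a_n+a_m$, and by Fekete's lemma $\lim_n a_n/n=\inf_n a_n/n=:\lambda$ exists. Applying the same reasoning to $f^{-1}$, whose center bundle is the same $E^c$, it suffices to show $\lambda\le 0$ for both $f$ and $f^{-1}$. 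Indeed, if $\lambda\le 0$, then for each $\varepsilon>0$ there is $n_0$ with $a_n\le \varepsilon n$ for $n\ge n_0$. Taking $C_\varepsilon:=\max\bigl(1,\max_{n<n_0}e^{a_n}\bigr)$ covers the finitely many remaining $n$, and the symmetric argument handles negative $n$.

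Suppose instead that $\lambda>0$. For each $n$ pick $x_n$ and a unit vector $v_n\in E^c(x_n)$ with $\log\|Df^nv_n\|\ge a_n-1\ge n\lambda-1$. We lift the dynamics to the projectivized center bundle $\mathbb P E^c$, a compact space carrying the continuous map $F[v]=[Df v]$, and use the continuous cocycle $\phi([v])=\log\bigl(\|Df v\|/\|v\|\bigr)$. Then $\sum_{j<n}\phi(F^j[v_n])\ge n\lambda-1$. Form the empirical measures $\nu_n=\frac1n\sum_{j<n}\delta_{F^j[v_n]}$. Any weak-$*$ limit $\nu$ is $F$-invariant and satisfies $\int\phi\,d\nu\ge\lambda$. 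Pass to an ergodic component $\nu'$ of $\nu$ with $\int\phi\,d\nu'\ge\lambda$, and let $\mu=p_*\nu'$ be its projection to $M$, which is $f$-invariant and ergodic. By Birkhoff's theorem, for $\nu'$-a.e. $[v]$ the quantity $\frac1n\log\|Df^nv\|$ converges to $\int\phi\,d\nu'\ge\lambda>0$. By Oseledets' theorem this limit is a center Lyapunov exponent of $\mu$, so $\chi^c_{\max}(\mu)\ge\lambda>0$, contradicting the hypothesis.

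Two steps require care. First, the weak-$*$ limit must be invariant; this is the standard Krylov--Bogolyubov estimate $\|F_*\nu_n-\nu_n\|\le 2/n$. Second, Birkhoff limits along the lifted cocycle must be identified with Oseledets exponents on $E^c$. Continuity of $E^c$ and compactness make $\mathbb P E^c$ a compact metric space and $\phi$ continuous, so both steps go through. This is the content of the cited results \cite{sch98,kal11}.
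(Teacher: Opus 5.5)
Your proof is correct and complete, but it takes a more self-contained route than the paper. The paper gives no argument of its own. It treats the lemma as a corollary of the classical semi-uniform subadditive ergodic theorem \cite{sch98,kal11}: for the continuous subadditive cocycle $a_n(x)=\log\|D_xf^n|_{E^c(x)}\|$ over $f$ on $M$ itself, $\lim_n\frac1n\sup_x a_n(x)$ equals the supremum over ergodic $\mu$ of $\lim_n\frac1n\int a_n\,d\mu$. The latter is then identified with $\chi^c_{\max}(\mu)$, which is $0$.

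You instead prove the needed special case by hand. Lifting to the compact projectivized bundle $\mathbb{P}E^c$ turns the growth of an individual vector into an additive Birkhoff sum of the continuous function $\phi$. After that, Krylov--Bogolyubov, ergodic decomposition and Birkhoff's theorem suffice. The one step you leave implicit is harmless: pick $[v]$ in the intersection of the $\nu'$-Birkhoff-generic set with $p^{-1}(R)$, where $R$ is the set of $\mu$-regular points. This intersection has full $\nu'$-measure because $\nu'(p^{-1}R)=\mu(R)=1$. The Birkhoff limit at such a $[v]$ then falls under the paper's vectorwise definition of center exponents, which forces it to be $0$.

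Your route buys self-containment. It avoids the genuinely subadditive step in the cited theorem, where one compares $a_n$ with Birkhoff sums of $a_m$ for fixed $m$ and lets $m\to\infty$. It also avoids the Kingman step identifying $\lim_n\frac1n\int a_n\,d\mu$ with $\chi^c_{\max}(\mu)$. The cited route buys generality: it applies to arbitrary continuous subadditive cocycles with no linear or projective structure, and here it reduces the lemma to a one-line citation.
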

Moreover, if $Df|_{E^c}$ has subexponential growth, then $f$ is $\infty$-bunched in the sense that $f$ is $r$-bunched for any $r >0$. We will use the following consequence:

\begin{lemma}[see~\cite{hps77,psw97}]\label{lemma r-bunched}
Let $f:M\to M$ be a $C^\infty$ partially hyperbolic diffeomorphism. Suppose that $f$ is dynamically coherent and that $Df|_{E^c}$ has subexponential growth. Then
    \begin{enumerate}
        \item the leaves of $W^{cs}$, ${W}^{cu}$, and ${W}^c$ are uniformly $C^\infty$; 
        \item the local stable and unstable holonomy maps between $W^c$-leaves are uniformly $C^\infty$. 
    \end{enumerate}
\end{lemma}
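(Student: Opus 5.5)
The plan is to reduce both assertions to the classical normal hyperbolicity and bunching theorems of \cite{hps77,psw97}, by showing that subexponential growth of $Df|_{E^c}$ makes every regularity condition appearing there hold for every $r$. Fix $r\ge 1$. Partial hyperbolicity gives constants $0<\lambda<1<\mu$ and $C>0$ with $\|D_xf^n|_{E^s}\|\le C\lambda^n$ and $\|(D_xf^n|_{E^u})^{-1}\|\le C\mu^{-n}$ for $n\ge 0$, uniformly in $x$. Since $E^c$ is $Df$-invariant, subexponential growth applies to both $f$ and $f^{-1}$, so for any $\varepsilon>0$
\[
C_\varepsilon^{-1}e^{-\varepsilon n}\le \|(D_xf^n|_{E^c})^{-1}\|^{-1}\le \|D_xf^n|_{E^c}\|\le C_\varepsilon e^{\varepsilon n}.
\]
Choose $\varepsilon$ with $(r+1)\varepsilon<\min(-\log\lambda,\log\mu)$. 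Replace $f$ by a large iterate $f^N$; this changes neither the foliations nor the holonomies. Then the constants are absorbed, and the $r$-normal hyperbolicity inequalities
\[
\|Df^N|_{E^s}\|\,\|(Df^N|_{E^c})^{-1}\|^{r}<1,\qquad \|(Df^N|_{E^u})^{-1}\|\,\|Df^N|_{E^c}\|^{r}<1
\]
hold pointwise, together with the analogous inequalities involving $E^{cs}$ and $E^{cu}$. The center bunching inequalities of \cite{psw97} also hold with exponent $r$:
\[
\|Df^N|_{E^s}\|\,\|Df^N|_{E^c}\|^{r}\,\|(Df^N|_{E^c})^{-1}\|<1,
\]
and symmetrically for $E^u$. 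Thus $f$ is $r$-bunched for every $r$.

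For (1), dynamical coherence gives invariant foliations $W^{cs}$ and $W^{cu}$. The leaves of $W^{cs}$ are $r$-normally hyperbolic to the expanding bundle $E^u$, so the $C^r$ section theorem of \cite{hps77} makes them $C^r$ immersed submanifolds. The $C^r$ norms are uniform on local plaques, since the graph transform is a fiber contraction with uniform constants over the compact manifold. The same argument applied to $f^{-1}$ handles $W^{cu}$. The center leaves are, locally, transverse intersections of $W^{cs}$- and $W^{cu}$-plaques. They are therefore uniformly $C^r$ by the transversality of $E^{cs}$ and $E^{cu}$ and the implicit function theorem. Since $r$ is arbitrary, all three foliations have uniformly $C^\infty$ leaves.

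For (2), consider the stable holonomy between two center plaques inside a single $W^{cs}$ leaf. By (1) this leaf is uniformly $C^\infty$, and inside it $W^s$ and $W^c$ form a pair of transverse foliations. The holonomy is the limit $f^{-n}\circ h_n\circ f^{n}$, where $h_n$ are holonomies of smooth approximating transversals; equivalently, it is given by the invariant-section/jet argument of \cite{psw97}. The $r$-bunching inequality above is exactly the hypothesis there ensuring that the $r$-jets of the approximants converge uniformly. The stable holonomy is therefore uniformly $C^r$ on local plaques, and the unstable case follows by applying the same argument to $f^{-1}$. The step I expect to be the main obstacle is this last uniformity. One must check that the $C^r$ bounds on the holonomies do not depend on the chosen pair of center plaques, only on the bunching gap and the uniform $C^r$ geometry from (1). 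This is handled by working in uniformly $C^r$ charts of the $W^{cs}$ plaques and tracking the constants in the fiber contraction, which depend only on $\varepsilon$, $\lambda$, $\mu$ and the chosen iterate $N$.
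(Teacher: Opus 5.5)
Your proposal is correct and follows the same route as the paper. The paper only notes that subexponential growth of $Df|_{E^c}$ makes $f$ $r$-bunched (and $r$-normally hyperbolic) for every $r>0$ and then cites \cite{hps77,psw97}; passing to an iterate $f^N$ to absorb constants is the verification the paper leaves implicit, with one small correction: the lower bound on the conorm of $Df^n|_{E^c}$ comes from the subexponential estimate holding for all $n\in\mathbb{Z}$ (as in Lemma~\ref{lemma dwx}), not from the invariance of $E^c$ alone.
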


\section{Proofs}
\subsection{FAOI diffeomorphisms}
We begin with some basic properties of FAOI (fiberwise Anosov over the identity) diffeomorphisms. 
\begin{lemma}\label{lemma FAOI ph}
Let $f_0:M \to M$ be an FAOI diffeomorphism. Then 
\begin{itemize}
    \item $f_0$ is partially hyperbolic, plaque expansive, and dynamically coherent;
    \item $E^s \oplus E^u$ is integrable to a $C^\infty$ foliation $W^{su}$ with compact leaves.
\end{itemize}
\end{lemma}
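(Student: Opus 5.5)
The plan is to build the splitting directly from the fiber bundle structure. Write $F_y=\pi^{-1}(y)$ for the fibers, each a closed connected manifold preserved by $f_0$ since $\pi\circ f_0=\pi$. On each fiber $g_y:=f_0|_{F_y}$ is Anosov, with splitting $TF_y=E^s_y\oplus E^u_y$. Set $E^s(x)=E^s_{\pi(x)}(x)$ and $E^u(x)=E^u_{\pi(x)}(x)$, both contained in the vertical bundle $V=\ker D\pi$.

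The first issue is uniformity in $y$. The fibers vary smoothly, and $g_y$ depends smoothly on $y$. Locally trivialize $\pi^{-1}(U)\cong U\times F$; then $\{g_y\}_{y\in U}$ is a continuous family of Anosov diffeomorphisms of $F$. By openness of the Anosov property and continuity of cone fields, the hyperbolicity constants can be chosen uniformly on a neighborhood of each $y$. Compactness of $N$ then gives uniform constants. The same cone-field argument shows that $E^s$ and $E^u$ depend continuously on $x\in M$, not just along each fiber.

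Next, the center direction. Choose any continuous complement $H$ to $V$, so that $TM=V\oplus H$. Because $\pi\circ f_0=\pi$, we have $D\pi\circ Df_0=D\pi$. Hence, in the quotient $TM/V\cong \pi^*TN$, the induced action of $Df_0$ is the identity, and its iterates have bounded norm. A standard graph-transform argument, using the domination of $E^s$ and $E^u$ over this neutral quotient, produces a continuous invariant bundle $E^c$ complementary to $V$ with $D\pi|_{E^c}$ an isomorphism. Then $Df_0^n|_{E^c}$ is conjugate via $D\pi$ to the identity and so is uniformly bounded. This gives partial hyperbolicity with $TM=E^s\oplus E^c\oplus E^u$. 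Since $V=E^s\oplus E^u$ is tangent to the fibers, the second bullet follows: $W^{su}$ is the smooth foliation by fibers $F_y$, which are compact.

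For dynamical coherence, I would take $W^{cs}(x)=\bigcup_{y'} \,(\text{stable-type leaf})$; concretely, I would argue via the uniqueness and integrability of $E^c$ as follows. The exact identity on the base suggests that $E^c$ integrates to a lamination whose leaves locally project diffeomorphically to $N$. Then $W^{cs}$ is obtained as the union of $W^s$-leaves over center plaques. This is the step I expect to be the main obstacle, since a priori $E^c$ need not be $C^1$.

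My first attempt would avoid integrating $E^c$ directly. Use structural stability of the fiberwise Anosov $g_y$ along paths in $N$: for $y'$ near $y$, there is a unique conjugacy $h_{y,y'}:F_y\to F_{y'}$ close to the bundle trivialization with $h_{y,y'}\circ g_y=g_{y'}\circ h_{y,y'}$. The sets $\{h_{y,y'}(x):y'\in U\}$ are $f_0$-invariant local plaques tangent to $E^c$, since invariance plus boundedness forces the tangent to lie in the neutral direction. Local uniqueness of the conjugacy makes these plaques glue into a center foliation $W^c$. Then $W^{cs}=\bigcup_{z\in W^c(x)}W^s(z)$ and $W^{cu}$ defined analogously give the needed foliations; they are invariant, because $h_{y,y'}$ maps stable leaves of $g_y$ to stable leaves of $g_{y'}$.

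Plaque expansivity then follows from fiberwise expansivity. Take two pseudo-orbits along center plaques that stay $\delta$-close. Their base points remain in one small chart, so via $h_{y,y'}$ they project to orbits of a single Anosov $g_y$ that stay close. Expansivity of $g_y$, with a uniform constant, forces them to coincide, so the points lie on the same local center plaque.
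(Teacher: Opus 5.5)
Your treatment of partial hyperbolicity (graph transform over the neutral quotient $TM/V$) and of $W^{su}$ (the fibers) agrees with the paper. For the remaining claims, the paper does not build $W^c$ by hand. It notes the neutral-center bound $K^{-1}\|v\|\le\|Df_0^n v\|\le K\|v\|$ on $E^c$ and cites \cite[Corollary~7.6]{hhucoherence}, which gives dynamical coherence and plaque expansivity together.

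\textbf{The gap: plaque expansivity.} Your claim that the base points of the two pseudo-orbits ``remain in one small chart'' is false. In an $\epsilon$-pseudo-orbit respecting $W^c$, each step $p_{n+1}\in W^c_\epsilon(f_0(p_n))$ is a slide along a center plaque, and center plaques are transverse to the fibers. Since $\pi\circ f_0=\pi$, the sequence $(\pi(p_n))_{n\in\mathbb Z}$ can be any $C\epsilon$-chain in $N$, so it can cross all of $N$ and wind around non-contractible loops.

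Reducing to one fiber then requires composing an unbounded number of local holonomies $h_{y,y'}$, and these compositions are not equicontinuous. For example, take $f_0[z,t]=[Az,t]$ on the mapping torus of a hyperbolic automorphism $A$ of $\mathbb T^2$. The center monodromy around the base circle is $A$, so the composed holonomies include all powers $A^k$. Since each $h_{y,y'}$ is moreover only H\"older, there is no per-step control of distances that could be iterated. So neither the $\delta$-closeness nor the uniform expansivity constant of the $g_y$ transfers along the pseudo-orbit.

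\textbf{What is needed.} You need an argument that plays the accumulated holonomy against the hyperbolicity. Besides citing the neutral-center result as the paper does, one route is the following sketch. Use a finite atlas of trivializations in which $W^c$ is horizontal. Then $f_0=\mathrm{id}\times g_i$ in each chart, and the transition maps are finitely many homeomorphisms $\tau$ with $\tau g_i=g_j\tau$. Choose charts along the pseudo-orbit so that a change of chart occurs at most once every $\gtrsim L/\epsilon$ steps, where $L$ is a Lebesgue number. The separation time of two points on a local unstable plaque drops by exactly one per step inside a chart and changes by at most a constant $C_\tau$ at a chart change. So for $\epsilon$ small it reaches $0$ in finite time unless the points coincide. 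The stable side is symmetric in backward time.

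\textbf{Two smaller omissions in the coherence argument.}
First, before speaking of tangents you must know that $y'\mapsto h_{y,y'}(x)$ is $C^1$. This is true, because the implicit-function-theorem proof of structural stability gives smooth dependence of $h_{y,y'}$ on $y'$ in $C^0(F,F)$. Second, invariance alone does not show that $\bigcup_{z\in W^c(x)}W^s(z)$ is a $C^1$ leaf tangent to $E^{cs}$. That needs HPS theory for the normally hyperbolic center lamination, or a Journ\'e-type argument like the one in Lemma~\ref{lemma su cinfty}.
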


\begin{proof}
Let $\pi_0:M\to N$ be the bundle projection. Since $f_0$ is Anosov along the fibers and projects to the identity map on $N$, the graph transform \cite{hps77} therefore gives an invariant complement $E^c$ with
\[
TM=E^s\oplus E^c\oplus E^u,
\qquad \ker D\pi_0=E^s\oplus E^u,
\]
and 
\[
K^{-1}\|v\|\leq\|D_xf_0^n(v)\|\leq K\|v\|,
\qquad \forall x\in M,\ n\in\mathbb Z,\ v\in E^c(x)
\]
for some $K\geq1$. This proves partial hyperbolicity. Dynamical coherence and plaque expansivity follow from \cite[Corollary~7.6]{hhucoherence}. Since the leaves of $W^{su}$ are precisely the fibers of $\pi_0$, they form a $C^\infty$ foliation with compact leaves. 
\end{proof}
The following proposition is the main result in this subsection. 

\begin{proposition}\label{prop single}
Let $f_0$ be an FAOI diffeomorphism and let $f$ be a $C^\infty$ diffeomorphism sufficiently $C^1$-close to $f_0$. Suppose there exists a bi-H\"older homeomorphism $h$ such that 
\[h \circ f_0 = f \circ h.\]
Then $f$ is also FAOI, and $\W^{su} = h(W^{su})$ is a $C^\infty$ foliation with compact leaves.
\end{proposition}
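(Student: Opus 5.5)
By Lemma~\ref{lemma FAOI ph}, $f_0$ is partially hyperbolic, plaque expansive and dynamically coherent. Partial hyperbolicity is $C^1$-open, so $f$ is partially hyperbolic with splitting $TM=E^s_f\oplus E^c_f\oplus E^u_f$ close to that of $f_0$. By the Hirsch--Pugh--Shub stability of plaque expansive normally hyperbolic laminations \cite{hps77}, $f$ is dynamically coherent. Its center foliation $W^c_f$ is leaf conjugate to $W^c_{f_0}$ by a homeomorphism $h_c$ close to the identity, and $h_c\circ f_0$ maps each $W^c_{f_0}$-leaf into the $W^c_f$-leaf of $f\circ h_c$. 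The plan has two halves. First, show that $h$ itself maps $W^{su}$-leaves (the fibers $\pi_0^{-1}(y)$) onto compact sets $\W^{su}(h(x))$ that are smooth submanifolds tangent to $E^s_f\oplus E^u_f$. Second, show that these submanifolds form a $C^\infty$ fibration.

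\textbf{Step 1: $h$ carries stable and unstable leaves and fibers correctly.} Since $h$ is a conjugacy between $f_0$ and $f$ and is uniformly continuous, it maps $W^s_{f_0}$ to $W^s_f$ and $W^u_{f_0}$ to $W^u_f$. These are characterized topologically as the sets of points whose forward (resp.\ backward) orbits converge exponentially; bi-H\"older continuity of $h$ preserves exponential convergence. A fiber $F=\pi_0^{-1}(y)$ is exactly the $su$-accessibility class of any of its points: it is compact and connected, $f_0|_F$ is Anosov, and $W^s$ and $W^u$ leaves inside $F$ connect any two points by $su$-paths. Hence $h(F)$ is the $su$-accessibility class of $h(x)$ for $f$, and in particular it is a compact set.

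\textbf{Step 2: the center behaves isometrically, which gives smoothness.} Next I show that $Df|_{E^c_f}$ has subexponential growth. Let $\mu$ be an $f$-invariant ergodic measure. Then $\nu=h^{-1}_*\mu$ is $f_0$-invariant, and $f_0$ has zero center exponents because its center is uniformly bounded. For $f$, each center leaf $W^c_f(h(x))$ meets $h(F)$ for all nearby fibers. Because $f_0$ is the identity on $N$, every center-direction displacement between points on a local center plaque is preserved under $f_0$, up to stable and unstable holonomy. Transporting this through $h$, the displacement between $h(F)$ and $h(F')$ measured along $W^c_f$ stays bounded under all iterates of $f$. Combining this bound with H\"older continuity of $h$, an exponential rate $\chi\neq 0$ in $E^c_f$ would force a center distance to grow like $e^{\chi n}$ along $W^c_f$, while bi-H\"older control via $h$ caps that growth polynomially in the $f_0$ distance, which is bounded. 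Hence $\chi^c_{\min}=\chi^c_{\max}=0$ for every ergodic $\mu$.

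Lemma~\ref{lemma dwx} then gives subexponential growth, and Lemma~\ref{lemma r-bunched} gives uniformly $C^\infty$ center leaves and $C^\infty$ stable and unstable holonomies between center leaves. The sets $h(F)$ are saturated by $W^s_f$ and $W^u_f$ and are invariant under the $su$-holonomies. So $E^s_f\oplus E^u_f$ is integrable along them, each $h(F)$ is a compact $C^\infty$ leaf, and the holonomies between transversals of these leaves are compositions of the $C^\infty$ $s$- and $u$-holonomies. Therefore $\W^{su}$ is a $C^\infty$ foliation with compact leaves.

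\textbf{Step 3: the leaf space is a bundle and $f$ is FAOI.} The leaves of $\W^{su}$ are $h$-images of the fibers of $\pi_0$, so all holonomy is trivial: the fibers have no holonomy, and $h$ transports that. Hence the leaf space is a $C^\infty$ manifold $N'$ homeomorphic to $N$, and $\pi:M\to N'$ is a $C^\infty$ fiber bundle. We have $\pi\circ f=\pi$ because $h\circ f_0=f\circ h$ and $f_0$ preserves each fiber. Finally, $f$ restricted to each leaf is $C^\infty$ and is hyperbolic along $E^s_f\oplus E^u_f$, which is the leaf's tangent bundle, so $f$ is Anosov on every fiber.

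\textbf{Main obstacle.} The hard step is Step 2: proving zero center exponents for $f$, that is, turning the topological statement ``$f$ acts trivially on the leaf space'' into derivative control on $E^c_f$. If the distance argument sketched above proves insufficient, I would fall back on a periodic-point argument. Periodic points of $f$ correspond under $h$ to those of $f_0$, and for $f_0$ the center derivative at periodic points is bounded; I would then try to show via a Livsic-type argument that the center Jacobian of $f$ at periodic points equals $1$, and pass to all invariant measures by approximation.
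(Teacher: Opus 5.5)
Your architecture is the paper's: match the strong foliations through $h$, show that every center exponent of $f$ vanishes, get subexponential growth from Lemma~\ref{lemma dwx} and smooth center leaves and $s$/$u$-holonomies from Lemma~\ref{lemma r-bunched}, write the $su$-holonomy between center plaques as a composition of these, and finish with Journé's lemma and the leaf-space argument. The genuine gap is in the step that carries all the weight, the vanishing of the center exponents. As written, Steps~1 and~2 are also circular.

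In Step~1 you characterize $W^s_f$ by exponential convergence of forward orbits. This is true for $f_0$; even plain convergence suffices there, since $f_0$ preserves each compact fiber and is expansive on it. For $f$, however, it holds only once you know that $E^c_f$ has no exponential contraction, which is the output of Step~2. Comparing rates instead would make the required closeness depend on the Hölder exponent of $h$, which the statement does not allow. Conversely, Step~2 ``transports stable and unstable holonomy through $h$'', so it uses Step~1.

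Step~2 itself does not establish the claim. A bound on displacements contradicts a nonzero exponent only if you exhibit actual points whose separation is governed by that exponent. A Lyapunov exponent of $Df|_{E^c_f}$ is a statement about the linearization at $\mu$-typical points. Nothing in your sketch produces such points, nor places their $h$-preimages on a common $f_0$-center plaque where your bound would apply.

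The missing idea is Pesin theory plus a dimension count, and it uses only $f_0$:
\begin{enumerate}
\item Suppose an ergodic $\mu$ has a negative center exponent. At a regular point $p$, the Pesin local stable manifold is an embedded disk $D$ of dimension $>\dim E^s$, all of whose points are forward asymptotic to the orbit of $p$.
\item By uniform continuity, $h^{-1}(D)$ lies in the topological stable set of $h^{-1}(p)$ for $f_0$. That set is the leaf $W^s_{f_0}(h^{-1}(p))$, because points in different fibers never approach and $f_0$ is expansive on each fiber.
\item A continuous injection of a disk of dimension $>\dim E^s$ into a $\dim E^s$-dimensional leaf contradicts invariance of domain.
\item Applying the same argument to $f^{-1}$ excludes positive exponents.
\end{enumerate}
Since this uses only topological stable sets of $f_0$, it can be done first. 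Lemma~\ref{lemma dwx} then gives subexponential growth, so exponential convergence really does characterize $W^s_f$ and $W^u_f$. The rest of your plan then goes through; you still need a Journé-type result such as \cite{ks06} to see that each $h(F)$ is a $C^\infty$ leaf.

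Your periodic-point fallback would not close the gap. A center Jacobian equal to $1$ does not force the individual center exponents to vanish when $\dim E^c\ge 2$. You would also still need to pass from periodic orbits to arbitrary ergodic measures. The dimension count above already works directly.
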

Let $\pi_0:M\to N$ be the bundle projection for $f_0$. Since $f_0$ is FAOI, it is partially hyperbolic, plaque expansive, and dynamically coherent. So any diffeomorphism $f$ sufficiently $C^1$-close to $f_0$ is also partially hyperbolic, plaque expansive, and dynamically coherent  \cite{hps77}. Let 
\[
TM = E^s \oplus E^c \oplus E^u \quad \text{and} \quad TM = \E^s \oplus \E^c \oplus \E^u
\]
be the dominated splittings of $f_0$ and $f$, respectively, and let $W^\ast$ and $\W^\ast$ denote the invariant foliations tangent to $E^\ast$ and $\E^\ast$ for $\ast= s,c,u,cs,cu$. 

\begin{lemma}\label{lemma foliation}
For every $x \in M$ and $\ast = s,c,u,cs,cu$,
\[
h(W^\ast(x)) = \W^\ast(h(x)).
\]
\end{lemma}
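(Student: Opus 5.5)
The plan is to treat the strong foliations first, by a dynamical characterization, and then pass to the center-type foliations using dynamical coherence, plaque expansivity and the leaf conjugacy from \cite{hps77}.

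\textbf{Strong foliations.} For $\ast=s$ I will use that $W^s(x)$ is characterized, for the partially hyperbolic $f_0$, as the set of points $y$ with $d(f_0^n x,f_0^n y)\to 0$ exponentially at a rate faster than any center rate. Since $Df_0|_{E^c}$ is uniformly bounded (Lemma \ref{lemma FAOI ph}), it suffices to have exponential convergence at some rate $e^{-\lambda n}$ with $\lambda$ larger than the (near zero) center rate. The same characterization holds for $f$: the center of $f$ has exponents close to $0$ by $C^1$-closeness, while the strong stable rate stays bounded away from $0$.

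Now let $y\in W^s(x)$, so that $d(f_0^nx,f_0^ny)\le Ce^{-\lambda n}$. Since $h$ is $\theta$-H\"older, $d(f^nhx,f^nhy)\le C'e^{-\theta\lambda n}$. This decay is exponential, but the rate $\theta\lambda$ might not beat the center rate of $f$. To fix this, I will use the H\"older regularity more carefully. The exponent $\theta$ of the structural-stability conjugacy tends to $1$ as $f\to f_0$ in $C^1$, so $\theta\lambda$ still dominates the small center rates of $f$. Alternatively, one could replace $f_0$ by a large iterate, which multiplies the gap. Once the rate dominates, $hy\in\W^s(hx)$, so $h(W^s(x))\subset\W^s(hx)$. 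Applying the same argument to $h^{-1}$ (also H\"older) gives equality. The case $\ast=u$ is symmetric.

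\textbf{Center-type foliations.} For $\ast=c,cs,cu$, I will use that $f_0$ and $f$ are dynamically coherent and plaque expansive. By \cite{hps77} there is then a leaf conjugacy $h_c$ sending $W^c$ to $\W^c$ and close to the identity. I will argue that $h(W^c(x))=\W^c(hx)$ by showing that $h$ maps center leaves to sets consisting of points whose orbits stay at bounded distance within the center-stable and center-unstable directions. The key characterization is
\[
W^{cs}(x)=\bigcup_{z\in W^c(x)}W^s(z)=\{y:\ \sup_{n\ge0}d_{\text{leaf}}\text{-shadowing}\}.
\]
Concretely, $y\in W^c(x)$ exactly when there is a center pseudo-orbit connecting the two, that is, when $f_0^ny$ stays in a small center plaque around $f_0^nx$ for all $n\in\ZZ$, after splitting into plaque chains. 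Since $h$ is $C^0$-close to the identity and conjugates the dynamics, plaque chains are carried to $\varepsilon$-pseudo-orbits of $f$ that respect $\W^c$ plaques up to small error. Plaque expansivity then pins these down to genuine $\W^c$ leaves. In practice I would cite the uniqueness part of the leaf-conjugacy theorem: any conjugacy $C^0$-close to the identity that carries the strong foliations correctly coincides leafwise with $h_c$. With the center foliation in hand, $cs$ and $cu$ follow by saturating $\W^c$ with $\W^s$ and $\W^u$, using the strong-foliation step. The bounded center growth of $f_0$ (Lemma \ref{lemma FAOI ph}) should make this simpler: $W^c(x)$ is the set of $y$ with $\sup_{n\in\ZZ}d(f_0^nx,f_0^ny)<\infty$ locally along center plaques.

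\textbf{Main obstacle.} I expect the hardest step to be the center step: proving that a merely topological conjugacy respects center leaves. I would first try the bounded-distance characterization together with plaque expansivity, and fall back on the uniqueness of leaf conjugacies if that characterization proves too delicate.
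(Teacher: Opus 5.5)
Your overall architecture (strong foliations by a rate characterization, then the center, then $cs/cu$ by saturation) matches the paper's. Your strong-foliation route is a legitimate alternative to the paper's. The paper first shows, by transporting a Pesin stable disk with $h^{-1}$ and counting dimensions, that every center exponent of $f$ vanishes, so that any exponential convergence characterizes $\mathcal{W}^s$. You instead use that $C^1$-closeness to $f_0$, whose center derivative is uniformly bounded, gives $\|Df^{\pm n}|_{\mathcal{E}^c}\|\le C e^{\varepsilon|n|}$ with $\varepsilon\to 0$. That is more elementary; the paper's exact vanishing is what later feeds the $\infty$-bunching in Lemma~\ref{lemma su cinfty}. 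However, the way you close the rate comparison is wrong. That the H\"older exponent $\theta$ of $h$ tends to $1$ is not a standard fact: the usual proof gives an exponent controlled by ratios of hyperbolicity rates to Lipschitz constants, which is not close to $1$ in general. Passing to iterates does nothing either, since $h$ also conjugates $f_0^N$ to $f^N$ with the same $\theta$, while both $\lambda$ and the center rate get multiplied by $N$. The correct fix uses only what you already have: $\theta\ge\theta_0>0$ uniformly on a $C^1$-neighborhood of $f_0$, while $\varepsilon\to 0$, so $\theta_0\lambda>\varepsilon$ once $f$ is close enough.

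The center step has a genuine gap. The characterization $\sup_n d(f_0^nx,f_0^ny)<\infty$ is vacuous on a compact manifold. The fallback is circular: $h(W^c)$ is only a topological lamination, and showing that it coincides with $\mathcal{W}^c$ is exactly what the lemma asserts, so no uniqueness statement from \cite{hps77} applies. The argument that works (the paper's) needs no pseudo-orbits. Bounded center growth of $f_0$ keeps $f_0^n(P)$ uniformly small for all $n\in\ZZ$ when $P$ is a small center plaque through $x$. Uniform continuity of $h$ then makes $d(f^nhx,f^nhy)=d(hf_0^nx,hf_0^ny)$ smaller than the plaque-expansivity constant of $f$ for all $n$ and all $y\in P$, forcing $hy\in\mathcal{W}^c_{\mathrm{loc}}(hx)$. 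This yields only $h(W^c(x))\subset\mathcal{W}^c(hx)$. Your symmetric trick with $h^{-1}$ breaks down here, because $f$ has merely $e^{\varepsilon|n|}$ center growth and its center plaques need not stay small. You need an extra step. The map $h|_{W^c(x)}$ is a continuous injection between leaves of equal dimension, hence open by invariance of domain. Since $h^{-1}$ of a $\mathcal{W}^c$-leaf $L$ is $W^c$-saturated, $L$ is a disjoint union of such open images, and connectedness of $L$ leaves exactly one. Only with this equality do the $cs$ and $cu$ statements follow by saturation.
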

\begin{proof}
We first show that all center Lyapunov exponents of $f$ vanish. Suppose that an $f$-invariant ergodic probability measure $\mu$ has a negative center exponent. At a regular point $x$, the local Pesin stable manifold then has dimension strictly larger than $\dim E^s$. Choose a sufficiently small disk $D$ in this manifold, then the exponential contracting property implies that
\[
h^{-1}(D)\subset W^s_{\mathrm{loc}}(x_0),
\]
which contradicts with the dimension assumption. This rules out negative center exponents. Applying the same argument to $f^{-1}$ rules out positive center exponents. By Lemma~\ref{lemma dwx}, for every $\varepsilon>0$, there exists a constant $C_\varepsilon$ such that
\begin{equation}\label{eq subexp perturbation}
\sup_{x\in M}\|D_xf^n|_{\E^c(x)}\|
\leq C_\varepsilon e^{\varepsilon|n|},
\qquad n\in\mathbb Z.
\end{equation}

We next identify the strong foliations. For a partially hyperbolic diffeomorphism with subexponential center growth, exponential convergence of forward orbits characterizes the strong stable leaves. This applies to both $f_0$ and $f$. Since a bi-H\"older conjugacy preserves exponential convergence, it follows that
\[
h(W^s(x))=\W^s(h(x)).
\]
The corresponding argument for the inverse maps gives the assertion for the unstable foliations.

For the center foliation, the uniform bounds on $Df_0^n|_{E^c}$ imply that the iterates of a sufficiently small center plaque remain uniformly small for all $n\in\mathbb Z$. Uniform continuity of $h$ and plaque expansivity of $f$ then places this image in a single local center plaque of $f$. Thus $h$ maps each $W^c$-leaf into a $\W^c$-leaf. By invariance of domain, these images are open in the target leaves; since they partition each connected target leaf, each image is the entire leaf. Finally, the assertions for $cs$ and $cu$ follow from the local product structure of the center and strong foliations.
\end{proof}
Since $E^s \oplus E^u$ is integrable to a foliation $W^{su}$, Lemma~\ref{lemma foliation} implies that $\W^s$ and $\W^u$ are topologically jointly integrable, i.e., there exists a continuous foliation $\W^{su}$ of dimension $\dim \W^s + \dim \W^u$ which is subfoliated by $\W^s$ and $\W^u$.

\begin{lemma}\label{lemma su cinfty}
$\W^{su}$ is a $C^\infty$ foliation.
\end{lemma}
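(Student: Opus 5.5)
The plan is to realize the leaves of $\W^{su}$ as the fibers of a smooth map, using the uniform smoothness supplied by Lemma~\ref{lemma r-bunched}. By \eqref{eq subexp perturbation}, $f$ is a $C^\infty$ partially hyperbolic, dynamically coherent diffeomorphism with subexponential center growth. Lemma~\ref{lemma r-bunched} therefore applies: the center leaves are uniformly $C^\infty$, and the local stable and unstable holonomies between center leaves are uniformly $C^\infty$. Locally, $\W^{su}$ is transverse to $\W^c$, and its holonomy between two local center plaques is the composition of an $\W^s$-holonomy with an $\W^u$-holonomy. To see this, take $y\in \W^{su}_{loc}(x)$. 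By local product structure inside the leaf $\W^{su}(x)$, which is subfoliated by $\W^s$ and $\W^u$, there is $z\in\W^s_{loc}(x)\cap\W^u_{loc}(y)$. Hence the $su$-holonomy from $\W^c_{loc}(x)$ to $\W^c_{loc}(y)$ equals $H^u_{z\to y}\circ H^s_{x\to z}$, which is uniformly $C^\infty$ because each factor is.

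Next I would build $C^\infty$ foliation charts. Fix $x$ and let $T=\W^c_{loc}(x)$, a $C^\infty$ disk of dimension $\dim E^c$. Define
\[
\Phi:\W^{su}_{loc}(x)\times T\to M, \qquad \Phi(y,t)=H^{su}_{x\to y}(t),
\]
which is a homeomorphism onto a neighborhood of $x$ sending the slices $\{\cdot\}\times\{t\}$ into $\W^{su}$-plaques. It remains to show that $\Phi$, or equivalently the local quotient map $p:U\to T$ along $\W^{su}$-plaques, is $C^\infty$. I would argue with Journé's theorem. Restricted to a $\W^{cs}$-leaf, $p$ is the stable holonomy onto $T$. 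That restriction is $C^\infty$ along $\W^s$ (it is constant there) and $C^\infty$ along $\W^c$ (it is an $s$-holonomy between center leaves, which is uniformly $C^\infty$ by Lemma~\ref{lemma r-bunched}). Since $\W^{cs}$-leaves are $C^\infty$ by Lemma~\ref{lemma r-bunched}, Journé's theorem gives that $p$ is uniformly $C^\infty$ on each $cs$-plaque.

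Now I want to combine the $cs$ direction with the $u$ direction. The map $p$ is constant along $\W^u$, and the foliations $\W^u$ and $\W^{cs}$ are transverse with uniformly $C^\infty$ leaves. A second application of Journé's theorem, with uniformity of the derivatives in the transverse parameter, yields $p\in C^\infty(U)$. Its differential $Dp$ is surjective, since restricted to $\E^c$ it is the derivative of a holonomy diffeomorphism. So $p$ is a submersion whose fibers are the $\W^{su}$-plaques, and $\W^{su}$ is a $C^\infty$ foliation.

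The main obstacle is the Journé step. Journé's theorem requires the two transverse foliations to have uniformly $C^\infty$ leaves, and the function to be uniformly smooth along each with Hölder-continuous dependence of all derivatives. I must check that the uniformity in Lemma~\ref{lemma r-bunched}(2) gives exactly this: the holonomy derivatives vary continuously, even Hölder, with base points. If $\W^{cs}$ is only continuous transversally, I would instead apply Journé directly in the triple $(\W^s,\W^c,\W^u)$ by first working inside $\W^{cu}$-leaves and then inside $\W^{cs}$-leaves, which is the standard approach in the $\infty$-bunched setting.
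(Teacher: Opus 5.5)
Your argument is correct. Its first step is the same as the paper's: the $\W^{su}$-holonomy between center plaques equals $H^u\circ H^s$, so it is uniformly $C^\infty$ by Lemma~\ref{lemma r-bunched}. Where you differ is in how you use Journé's lemma.

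The paper first cites \cite[Lemma~4.1]{ks06} to get that $\W^{su}$ has uniformly $C^\infty$ leaves, because it is subfoliated by $\W^s$ and $\W^u$. It then applies Journé once, to the local quotient map and the transverse pair $(\W^{su},\W^c)$.

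You never need the $\W^{su}$-leaves to be smooth in advance. You apply Journé inside $\W^{cs}$-leaves to the pair $(\W^s,\W^c)$, and then on $M$ to the pair $(\W^u,\W^{cs})$. This uses only the leaf regularity already given by Lemma~\ref{lemma r-bunched}(1) and by the strong foliations. Smoothness of the $\W^{su}$-leaves then follows, since they are level sets of the $C^\infty$ submersion $p$. Your route is more self-contained; the paper's is shorter given the citation.

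Two minor points:
\begin{itemize}
\item On a $\W^{cs}$-plaque that does not contain $T$, $p$ is not the stable holonomy onto $T$. It is a stable projection followed by an unstable holonomy, so on center plaques it is an $su$-holonomy. This is harmless, since you already showed these are uniformly $C^\infty$.
\item Your closing worry is unnecessary. Journé's lemma only needs transverse continuous foliations with uniformly smooth leaves, together with uniform leafwise bounds on the function. So it is fine that $\W^{cs}$ is only continuous transversally. The uniformity across $cs$-plaques needed in your second step comes from the quantitative form of the lemma in the first step.
\end{itemize}
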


\begin{proof}
Since $\W^{su}$ is subfoliated by $\W^s$ and $\W^u$, both of which have uniformly $C^\infty$ leaves, it follows from \cite[Lemma~4.1]{ks06} that $\W^{su}$ has uniformly $C^\infty$ leaves tangent to $\E^s \oplus \E^u$.

The subexponential estimate \eqref{eq subexp perturbation} and Lemma~\ref{lemma r-bunched} imply that the center leaves and the local stable and unstable holonomy maps between them are uniformly $C^\infty$. By the local product structure inside a $\W^{su}$-leaf, its local holonomy between center plaques is a composition of stable and unstable holonomies. Hence this holonomy is uniformly $C^\infty$ as well. By Journ\'e's lemma \cite{jou88}, $\W^{su}$ is a $C^\infty$ foliation. 
\end{proof}
In particular, the foliation $\W^{su}=h(W^{su})$ gives a smooth fiber bundle $\pi':M\to M/\W^{su}$. The map induced by $h$ on the leaf spaces conjugates the quotient dynamics of $f_0$ and $f$. Since the former is the identity, so is the latter. Moreover, $f$ is uniformly hyperbolic on each fiber because its tangent bundle is $\E^s\oplus\E^u$. Thus $f$ is FAOI with respect to $\pi'$, completing the proof of Proposition~\ref{prop single}.

\subsection{Proof of Theorem~\ref{theorem rank 1}}\label{sec t1}
We first show that $\alpha_0$ and $\alpha$ are topologically conjugate.
\begin{lemma}\label{lemma topo conjugacy}
There exists a bi-H\"older continuous homeomorphism $h:M \to M$ such that
\[
h \circ \alpha_0(\mathbf{n}) = \alpha(\mathbf{n}) \circ h, \quad \forall \mathbf{n} \in \ZZ^k.
\]
\end{lemma}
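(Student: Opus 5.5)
The plan is to use the structural stability of a single Anosov element and then spread the conjugacy to the whole action using commutativity and expansivity. Since $\alpha_0$ is Anosov, fix an element $\mathbf{a}\in\ZZ^k$ such that $f_0:=\alpha_0(\mathbf{a})$ is an Anosov diffeomorphism. Anosov diffeomorphisms form a $C^1$-open set, so $f:=\alpha(\mathbf{a})$ is also Anosov once $\alpha$ is $C^1$-close to $\alpha_0$; closeness is only needed on a finite generating set of $\ZZ^k$. By structural stability of Anosov diffeomorphisms there is a homeomorphism $h$, $C^0$-close to the identity, with $h\circ f_0=f\circ h$. It is the unique such conjugacy in a $C^0$-neighborhood of the identity. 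Moreover $h$ and $h^{-1}$ are H\"older continuous; this is the standard fact that conjugacies between hyperbolic sets are bi-H\"older, coming from the local product structure and the exponential rates.

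It remains to show that $h$ conjugates every element of the action, and it suffices to do this for the generators $\mathbf{e}_1,\dots,\mathbf{e}_k$. Fix $i$ and set $g_0=\alpha_0(\mathbf{e}_i)$, $g=\alpha(\mathbf{e}_i)$, and
\[
h' := g\circ h\circ g_0^{-1}.
\]
Since $g$ commutes with $f$ and $g_0$ commutes with $f_0$, we get
\[
f\circ h' = g\circ f\circ h\circ g_0^{-1} = g\circ h\circ f_0\circ g_0^{-1} = g\circ h\circ g_0^{-1}\circ f_0 = h'\circ f_0 .
\]
So $h'$ is also a conjugacy from $f_0$ to $f$. Furthermore $h'$ is $C^0$-close to the identity, because $g$ is $C^0$-close to $g_0$ and $h$ is close to the identity.

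The uniqueness step then gives $h'=h$, that is, $h\circ g_0=g\circ h$. The argument is as follows. The map $h^{-1}\circ h'$ commutes with $f_0$ and is $C^0$-close to the identity. Hence for every $x$, the orbits $f_0^n(x)$ and $f_0^n(h^{-1}h'(x))$ stay within a small distance of each other for all $n\in\ZZ$. Expansivity of $f_0$ then forces $h^{-1}h'(x)=x$. The only delicate point is to keep all distances below the expansivity constant of $f_0$. This is arranged by first fixing that constant and then shrinking the $C^1$ (in fact $C^0$) neighborhood of the generators, so that $h$ is close enough to the identity and $g$ close enough to $g_0$. I expect this bookkeeping to be the only real obstacle, and it is routine.
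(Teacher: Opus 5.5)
Your proof is correct and is essentially the paper's argument. The paper gets $h$ from structural stability of an Anosov element, sets $\rho(\mathbf n)=h^{-1}\circ\alpha(\mathbf n)\circ h$, and applies expansivity of $\alpha_0(\mathbf n_A)$ to $g_i=\rho(\mathbf e_i)\circ\alpha_0(-\mathbf e_i)$, which is exactly your map $h^{-1}\circ h'$; the only difference is that you phrase this step as uniqueness of the conjugacy near the identity.
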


\begin{proof}
Let $\mathbf{n}_A \in \ZZ^k$ be such that $\alpha_0(\mathbf{n}_A)$ is Anosov. Since $\alpha(\mathbf{n}_A)$ is $C^1$-close to $\alpha_0(\mathbf{n}_A)$, the structural stability of Anosov diffeomorphisms implies that there exists a bi-H\"older continuous homeomorphism $h: M \to M$ that is $C^0$-close to $\mathrm{Id}_M$, such that
\begin{equation}\label{eq commute}
h \circ \alpha_0(\mathbf{n}_A) = \alpha(\mathbf{n}_A) \circ h.
\end{equation}
Define
\[
\rho(\mathbf{n}) := h^{-1} \circ \alpha(\mathbf{n}) \circ h.
\]
Then $\rho$ is a continuous $\ZZ^k$-action satisfying $\rho(\mathbf{n}_A) = \alpha_0(\mathbf{n}_A)$.

Let $\delta>0$ be an expansivity constant for $F_0=\rho(\mathbf{n}_A)=\alpha_0(\mathbf n_A)$, and fix generators $\mathbf e_1,\ldots,\mathbf e_k$ of $\mathbb Z^k$. By taking the perturbation sufficiently small, we may ensure that
\[
d_{C^0}\bigl(\rho(\mathbf e_i)\circ\alpha_0(-\mathbf e_i),\mathrm{Id}_M\bigr)<\delta,
\qquad 1\leq i\leq k.
\]
Each map $g_i=\rho(\mathbf e_i)\circ\alpha_0(-\mathbf e_i)$ commutes with $F_0$. Hence, for every $x\in M$ and $j\in\mathbb Z$,
\[
d\bigl(F_0^j(g_i x),F_0^j x\bigr)
=d\bigl(g_i(F_0^j x),F_0^j x\bigr)<\delta.
\]
Expansivity gives $g_i=\mathrm{Id}_M$. Thus $\rho(\mathbf e_i)=\alpha_0(\mathbf e_i)$ for every generator, and consequently $\rho(\mathbf n)=\alpha_0(\mathbf n)$ for every $\mathbf n\in\mathbb Z^k$.
\end{proof}

We now prove Theorem~\ref{theorem rank 1}.
Let $\pi_0:M\to N$ be the given fiberwise Anosov rank-one factor with a FAOI diffeomorphism $\alpha_0(\mathbf n_f)$. Lemma~\ref{lemma topo conjugacy} supplies a common bi-H\"older conjugacy $h$ for the actions $\alpha_0$ and $\alpha$. Applying Proposition~\ref{prop single} to $\alpha_0(\mathbf n_f)$ and $\alpha(\mathbf n_f)$ gives a smooth fiber bundle $\pi':M\to N'$ whose fibers are precisely the images under $h$ of the fibers of $\pi_0$. In particular, $\pi':M\to N'$ induces a $C^\infty$ rank one factor. Since $\alpha(\mathbf n_f)$ is also FAOI,  $\pi'$ actually induces a fiberwise Anosov rank-one factor for $\alpha$.

\subsection{Proof of Theorem~\ref{theorem product}}
By Lemma~\ref{lemma topo conjugacy}, there exists a bi-H\"older homeomorphism $h$ such that $h \circ \alpha_0 = \alpha \circ h$. Since $\alpha_0(1,0)$ is an FAOI diffeomorphism, it is partially hyperbolic with
\[
W^{su}_1(x,y) = M_1 \times \{y\}, \quad \forall (x,y) \in M_1 \times M_2.
\]
By Proposition~\ref{prop single}, $\alpha(1,0)$ is also FAOI and partially hyperbolic. Moreover, the strong distribution $\E^s_{\alpha(1,0)} \oplus \E^u_{\alpha(1,0)}$ integrates to a compact $C^\infty$ foliation satisfying
\[
\W^{su}_1(h(x,y)) = h\bigl(W^{su}_1(x,y)\bigr), \quad \forall (x,y) \in M_1 \times M_2.
\]
Similarly, by considering $\alpha_0(0,1)$ and $\alpha(0,1)$, we obtain
\[
\W^{su}_2(h(x,y)) = h\bigl(W^{su}_2(x,y)\bigr), \quad \forall (x,y) \in M_1 \times M_2,
\]
where $W^{su}_2(x,y) = \{x\} \times M_2$.

Choose $(x_0,y_0)\in M_1\times M_2$, and put
\[
\L_1=h(M_1\times\{y_0\}),\qquad
\L_2=h(\{x_0\}\times M_2).
\]
These are compact smooth leaves. The two smooth foliations $\W^{su}_1$ and $\W^{su}_2$ are transverse, since their tangent distributions are close to those of the original product foliations. Moreover, every leaf of one meets every leaf of the other in exactly one point, as follows from the topological conjugacy. Therefore
\[
\Psi(p)=\bigl(\W^{su}_2(p)\cap\L_1,\,
                   \W^{su}_1(p)\cap\L_2\bigr)
\]
defines a smooth diffeomorphism $M\to\L_1\times\L_2$. In this coordinate, the map $h$ takes the form
\[
h: M_1 \times M_2 \to \L_1 \times \L_2, \quad (x,y) \mapsto \bigl(h_1(x), h_2(y)\bigr),
\]
and the action $\alpha(\mathbf{n}) = h \circ \alpha_0(\mathbf{n}) \circ h^{-1}$ takes the form
\[
\alpha(\mathbf{n}): \L_1 \times \L_2 \to \L_1 \times \L_2, \quad (x,y) \mapsto \bigl(h_1 \circ f_1^m \circ h_1^{-1}(x),\; h_2 \circ f_2^n \circ h_2^{-1}(y)\bigr).
\]
Although $h_1$ and $h_2$ are in general only H\"older continuous, the maps
\[
f_i' := h_i \circ f_i \circ h_i^{-1} : \L_i \to \L_i, \quad i=1,2,
\]
are $C^\infty$ diffeomorphisms, since they arise as the components of the $C^\infty$ diffeomorphisms $\alpha(1,0)$ and $\alpha(0,1)$. Moreover, $f_i'$ is Anosov because it is the restriction of the corresponding FAOI element to a fiber.

It remains to identify $\L_i$ smoothly with $M_i$. Let $p_i:M_1\times M_2\to M_i$ be the coordinate projection. Since $T\L_i$ is close to the corresponding factor distribution, $p_i|_{\L_i}$ is a local diffeomorphism and, by compactness, a covering map. Since $h$ is $C^0$-close to the identity map,  $\L_i$ is $C^0$-close to $M_i$, the projection $p_i|_{\L_i}$ has one sheet and is thus a diffeomorphism. 

\section{Applying the main result to the Borel-Smale example}\label{ex:borel-smale}
Theorem \ref{theorem rank 1} also applies to Anosov actions which are not product actions, on general nilmanifolds. One example is obtained via the classical Borel--Smale construction \cite{Smale}. By applying Theorem \ref{theorem rank 1}, we are able to show that certain $C^\infty$ fiber bundle persists under sufficiently
$C^1$-small $C^\infty$ perturbations of the action.

Let $\mathfrak g$ be the real Lie algebra with basis $X_1,X_2,Y_1,Y_2,Z_1,Z_2$
whose only nonzero brackets are
\[
[X_1,Y_1]=Z_1,\qquad
[X_1,Y_2]=[X_2,Y_1]=Z_2,\qquad
[X_2,Y_2]=Z_1-Z_2.
\]
Denote
\[
V_X=\operatorname{span}\{X_1,X_2\},\qquad
V_Y=\operatorname{span}\{Y_1,Y_2\},\qquad
V_Z=\operatorname{span}\{Z_1,Z_2\}.
\]
Let $G$ be the simply connected Lie group with Lie algebra $\mathfrak g$. The subgroup
\[
\Gamma=
\bigl\langle
\exp X_1,\exp X_2,\exp Y_1,\exp Y_2,\exp Z_1,\exp Z_2
\bigr\rangle
\]
is a cocompact lattice in $G$. The quotient $M=\Gamma\backslash G$ is a nilmanifold admitting a natural torus fibration
\[
p:M\longrightarrow
\mathbb T^4=\mathbb T^2_X\times\mathbb T^2_Y
\]
induced by the quotient $\mathfrak g\to\mathfrak g/V_Z$ with fiber $\mathbb T^2$. This bundle is nontrivial, since $\pi_1(M)=\Gamma$ is non-abelian.

Consider the hyperbolic matrix
\[
A=\begin{pmatrix}2&1\\1&1\end{pmatrix}.
\]
Relative to $\mathfrak g=V_X\oplus V_Y\oplus V_Z$, define
\[
F=\operatorname{diag}(A,I,A),
\qquad
H=\operatorname{diag}(I,A,A).
\]
It is direct to check that these are commuting Lie algebra automorphisms, and both $F$ and $H$ preserve the brackets. Their induced automorphisms of $G$ preserve $\Gamma$
and therefore descend to commuting diffeomorphisms
$f_0,g_0$ of $M$.

Define the $\mathbb Z^2$-action
\[
\alpha_0(m,n)=f_0^m g_0^n,
\]
then $\alpha_0(1,1)$ is Anosov and $\alpha_0$ is an Anosov action. 

Consider the coordinate projections
\[
\pi_X=\operatorname{pr}_X\circ p,
\qquad
\pi_Y=\operatorname{pr}_Y\circ p.
\]
They satisfy
\[
\pi_X\circ\alpha_0(m,n)=A^m\circ\pi_X,
\qquad
\pi_Y\circ\alpha_0(m,n)=A^n\circ\pi_Y.
\]
Their fibers are four-dimensional tori, tangent to $V_Y\oplus V_Z$ and $V_X\oplus V_Z$ respectively. The map $g_0$ fixes every fiber of $\pi_X$ and acts on its tangent bundle by $A\oplus A$.
Similarly, $f_0$ fixes every fiber of $\pi_Y$ and acts on its tangent bundle by $A\oplus A$.
Consequently, both $\pi_X$ and $\pi_Y$ are fiberwise Anosov rank-one factors.

Let $\alpha$ be a $C^\infty$ $\mathbb Z^2$-action sufficiently $C^1$-close to $\alpha_0$. Applying Theorem~\ref{theorem rank 1} to these two factors gives smooth fiber bundle projections
\[
\pi_{X,\alpha}:M\to N_X,
\qquad
\pi_{Y,\alpha}:M\to N_Y,
\]
where $N_X$ and $N_Y$ are diffeomorphic to $\mathbb T^2$. Moreover, let $\mathcal F_X,\mathcal F_Y$ denote the original
fiber foliations, and let $\mathcal F_{X,\alpha},\mathcal F_{Y,\alpha}$
denote the perturbed ones, then Theorem~\ref{theorem rank 1} gives
\[
\mathcal F_{X,\alpha}=h(\mathcal F_X),
\qquad
\mathcal F_{Y,\alpha}=h(\mathcal F_Y).
\]
In particular, 
\[
p_\alpha
=
(\pi_{X,\alpha},\pi_{Y,\alpha})
:M\longrightarrow N_X\times N_Y
\]
is a $C^\infty$ fiber bundle over $N_X\times N_Y\cong\mathbb T^4$. Therefore, in this example, Theorem \ref{theorem rank 1} also implies that the  $\mathbb T^2$-fibration (corresponding initially to the center of the Lie algebra) persists
as a $C^\infty$ foliation under sufficiently
$C^1$-small $C^\infty$ perturbations of the action.

\bibliographystyle{amsalpha}
\bibliography{ref}
\end{document}